\newtheorem{theorem}{Theorem}[section]
\newtheorem{lemma}[theorem]{Lemma}
\newtheorem{claim}[theorem]{Claim}
\let\SS\relax
\newcommand{\SS}{\mathbb{S}}
\newcommand{\cD}{\mathcal{D}}
\newcommand{\cO}{\mathcal{O}}
\newcommand{\defined}{\mathrel{\coloneqq}}
\DeclarePairedDelimiter{\set}{\lbrace}{\rbrace}
\newcommand{\eps}{\varepsilon}
\newcommand{\inter}{\mathbin{\cap}}
\newcommand{\st}{\mathbin{\colon}}
\newcommand{\from}{\colon}
\newcommand{\<}{\langle}
\renewcommand{\>}{\rangle}
\DeclarePairedDelimiterX{\abs}[1]
  {\lvert}{\rvert}{\ifblank{#1}{\,\cdot\,}{#1}}
\DeclarePairedDelimiterX{\norm}[1]
  {\lVert}{\rVert}{\ifblank{#1}{\,\cdot\,}{#1}}
\newcommand{\disc}{\cD^d}
\title{A note on high-dimensional discrepancy of subtrees}
\author{Lawrence Hollom \footnote{Department of Pure Mathematics and Mathematical Statistics (DPMMS), University of Cambridge, Wilberforce Road, Cambridge, CB3 0WA, United Kingdom, \href{mailto:lh569@cam.ac.uk}{lh569@cam.ac.uk}} 
\and Lyuben Lichev \footnote{Institute of Science and Technology Austria (ISTA), 3400 Klosterneuburg, Austria, \href{mailto:lyuben.lichev@ist.ac.at}{lyuben.lichev@ist.ac.at}} 
\and Adva Mond \footnote{Department of Mathematics, King’s College London, Strand, London, WC2R 2LS, United Kingdom, \href{mailto:adva.mond@kcl.ac.uk}{adva.mond@kcl.ac.uk}} 
\and Julien Portier \footnote{Department of Pure Mathematics and Mathematical Statistics (DPMMS), University of Cambridge, Wilberforce Road, Cambridge, CB3 0WA, United Kingdom, \href{mailto:jp899@cam.ac.uk}{jp899@cam.ac.uk}}}
\date{\today}
\begin{document}

\maketitle

\begin{abstract}
For a tree $T$ and a function $f\from E(T)\to \SS^d$, the imbalance of a subtree $T'\subseteq T$ is given by $|\sum_{e \in E(T')} f(e)|$.
The $d$-dimensional discrepancy of the tree $T$ is the minimum, over all functions $f$ as above, of the maximum imbalance of a subtree of $T$.
We prove tight asymptotic bounds for the discrepancy of a tree $T$, confirming a conjecture of Krishna, Michaeli, Sarantis, Wang and Wang.
We also settle a related conjecture on oriented discrepancy of subtrees by the same authors.
\end{abstract}

\section{Introduction}

Discrepancy theory is a modern and dynamic area of research within mathematical optimisation and algorithm design, focused on the study of imbalanced structures in weighted or coloured mathematical objects. 
Originating with Weyl’s foundational work~\cite{Wey16}, the field quickly developed rich connections with number theory, combinatorics, ergodic theory, and discrete geometry. 
We recommend the excellent books by Beck and Chen~\cite{BC87}, Matoušek~\cite{Mat99}, and Chazelle~\cite{Cha00}, which provide a comprehensive introduction to discrepancy theory.

The study of discrepancy in graphs was pioneered by two foundational works: Erd\H{o}s and Spencer~\cite{Erdos1971} studied the discrepancy of $2$-colourings of the complete $k$-uniform hypergraph $K_n^k$ with respect to its subcliques, and a subsequent paper by Erd\H{o}s, F\"{u}redi, Loebl and S\'{o}s \cite{EFLS95} studied the discrepancy of $2$-colourings of the complete graph $K_n$ with respect to the copies of a particular $n$-vertex tree.
Formally, given a host graph $G$, a subgraph $H \subseteq G$ with $e$ edges, and an $r$-colouring $f\from E(G)\to [r]$, the (combinatorial) discrepancy of $H$ with respect to the colouring $f$ is defined as 
\begin{align}
\label{eq:DefDiscrepancy}
    \mathcal{D}_f(H)=\max_{H' \subseteq G:H' \cong H} \; \max_{i \in [r]} \; \left||f^{-1}(i) \cap E(H')|-\frac{e}{r} \right|,
\end{align}
where the first maximum ranges over all isomorphic copies $H'$ of $H$ in $G$. 
In turn, the (combinatorial) discrepancy of $G$ with respect to the copies of $H$ in $G$ is defined as the minimum, over all $r$-colourings $f$, of $\cD_f(H)$.

Recent years have seen a surge of interest in discrepancy within edge-coloured graphs.
Notably, Balogh, Csaba, Jing, and Pluhár~\cite{BCJP20} explored the discrepancy with respect to spanning trees and Hamilton cycles in 2-coloured graphs with high minimum degree. 
Their work was later extended to $r$ colours independently by Freschi, Hyde, Lada, and Treglown~\cite{FHLT21} and by Gishboliner, Krivelevich, and Michaeli~\cite{GKM22b}. 
Furthermore,~\cite{GKM22b} generalised the results on spanning trees from~\cite{BCJP20} by establishing weak connectivity conditions in multi-coloured graphs sufficient to guarantee a spanning tree with high discrepancy.
Very recently, the discrepancy of copies of a particular spanning tree in $r$-coloured dense graphs $G$ and related optimisation problems were studied by the authors of this paper~\cite{hollom2024discrepancies}.

Gishboliner, Krivelevich, and Michaeli~\cite{GKM22a} also examined $r$-colourings in the Erd\H{o}s-Rényi random graph $G(n,p)$ for values of $p$ just above the Hamiltonicity threshold $p_{\mathrm{Ham}} = (\log n + \log \log n)/n$ showing that, with high probability, for any $r$-colouring of $G(n,p)$ where $n(p - p_{\mathrm{Ham}}) \to \infty$, there exists a Hamilton cycle with at least $(2/(r+1) - o(1))n$ edges in a single colour. 
Moreover, they established a similar result for perfect matchings as well. 
In the 2-colour case, Bradač~\cite{Bra22} established a minimum degree threshold for containment of an $r$-th power of a Hamilton cycle with linear discrepancy.
In a similar setting, Balogh, Csaba, Pluhár, and Treglown~\cite{BCPT21} exhibited a minimum degree threshold for the existence of a $K_r$-factor with linear discrepancy, thus obtaining a discrepancy version of a theorem by Hajnal and Szemerédi \cite{hajnalSzemeredi}.
Bradač, Christoph, and Gishboliner~\cite{BCG2023} further generalised these results to general $H$-factors.
The discrepancy of tight Hamilton cycles in $k$-uniform hypergraphs was recently studied by Gishboliner, Glock, and Sgueglia~\cite{GGS23}.

\vspace{1em}

In this note, we are interested in a setting introduced by Krishna, Michaeli, Sarantis, Wang and Wang~\cite{KMSWW23}.
Given a tree $T$, the \emph{$d$-dimensional discrepancy} of $T$ relative to its subtrees is defined as
\begin{align}
\label{eq:d-dimensional-discrepancy-definition}
    \disc(T)\defined\min_{f\from E(T)\to \SS^d} \;\;\; \max_{T'\text{ subtree of }T} \; \Bigg\lVert \sum_{e\in E(T')} f(e) \Bigg\rVert,
\end{align}
where $\norm{}$ is the Euclidean norm, as it will be for the rest of this paper.\footnote{Note that the maximum in~\eqref{eq:d-dimensional-discrepancy-definition} is attained by continuity of the function 
\[(x_e)_{e\in T}\mapsto \max_{T'\text{ subtree of }T} \; \Bigg\lVert \sum_{e\in E(T')} x_e \; \Bigg\rVert\]
and compactness of the space $(\mathbb S^d)^{|E(T)|}$.
}
For convenience, we will slightly abuse notation by often identifying trees with their edge sets. 
Moreover, we write $T'\subseteq_c T$ to denote that $T'$ is a connected subset of edges of $T$, that is, a subtree of $T$.

We remark that the $d$-dimensional discrepancy of trees with respect to their subtrees has tight connections with the combinatorial discrepancy of $(d+1)$-colourings in the same setting.
More precisely, consider $d+1$ points $v_1,\ldots,v_{d+1}\in \mathbb S^d$ forming a regular simplex. Then, for any two distinct vertices $v_i$ and $v_j$, we have $\<v_i,v_j\>=-1/d$. 
Therefore, for all integers $a_1, \dots, a_{d+1}$ (corresponding to the number of edges in each colour), and for every $i\in [d+1]$, we have 
$$\<a_1v_1+\dots+a_{d+1}v_{d+1},v_i\>=\frac{d+1}{d} a_i-\frac{S}{d},$$ 
where $S=a_1+\dots+a_{d+1}$. 
As a result, 
$$\norm{a_1v_1+\dots+a_{d+1}v_{d+1}}^2= \sum_{i=1}^{d+1} a_i\bigg(\frac{d+1}{d} a_i-\frac{S}{d}\bigg)= \frac{1}{d}\sum_{j<i} (a_i-a_j)^2,$$ 
and therefore large $d$-dimensional discrepancy implies large combinatorial discrepancy.

Let $B$ denote the Beta function defined by $B(z_1,z_2)=\Gamma(z_1)\Gamma(z_2)/\Gamma(z_1+z_2)$, where $\Gamma$ is the Gamma function.
One of the main results from~\cite{KMSWW23} provides the following lower bound on the $d$-dimensional discrepancy of trees.

\begin{theorem}[Theorem 7 in \cite{KMSWW23}]
\label{thm:LBKrishna}
For every $d\in \mathbb N$ and every tree $T$ with $\ell$ leaves,
$$\disc(T) \geq \dfrac{\ell}{d\cdot B(\frac{d}{2},\frac{1}{2})}.$$
\end{theorem}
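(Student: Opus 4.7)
The plan is to reduce the $d$-dimensional question to a one-dimensional one via the spherical projection formula for the Euclidean norm. Setting $c_d\defined 2/(d\cdot B(d/2,1/2))$, a direct calculation of $\mathbb{E}_{u\sim\SS^d}\abs{u_1}$ using the marginal density $\propto (1-t^2)^{(d-2)/2}$ of a single coordinate of a uniform point on $\SS^d$ gives the identity $\norm{w}=c_d^{-1}\,\mathbb{E}_{u\sim\SS^d}\abs{\langle u,w\rangle}$ for every $w\in\RR^{d+1}$. Applied to $w=f(e)\in\SS^d$, this says $\mathbb{E}_u\abs{\langle u,f(e)\rangle}=c_d$ for every edge $e$.

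Fix any $f\from E(T)\to\SS^d$ and write $g_u(e)\defined\langle u,f(e)\rangle\in[-1,1]$. For every subtree $T'$ and every $u\in\SS^d$, the projection inequality gives $\norm{\sum_{e\in T'}f(e)}\ge\sum_{e\in T'}g_u(e)$; since the left-hand side of the target bound is independent of $u$, averaging over $u$ yields
\[
\max_{T'\sseq_c T}\norm*{\sum_{e\in T'}f(e)}\;\ge\;\mathbb{E}_u\,M^+(g_u),\quad\text{where}\quad M^{\pm}(g)\defined\max_{T'\sseq_c T}\Bigl(\pm\sum_{e\in T'}g(e)\Bigr).
\]
The antipodal symmetry $u\mapsto-u$ of the uniform measure on $\SS^d$ sends $g_u$ to $-g_u$ and so swaps $M^+$ with $M^-$, giving $\mathbb{E}_u M^+(g_u)=\tfrac12\mathbb{E}_u(M^++M^-)(g_u)$.

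The heart of the proof is a purely combinatorial lemma: for any tree $T$ and real edge weights $g\from E(T)\to\RR$,
\[
M^+(g)+M^-(g)\;\ge\;\sum_{v\text{ leaf of }T}\abs{g(e_v)},
\]
where $e_v$ denotes the unique edge incident to $v$. Granted this lemma and the identity $\mathbb{E}_u\abs{g_u(e_v)}=c_d$, we obtain $\mathbb{E}_u(M^++M^-)(g_u)\ge\ell c_d$, hence $\disc(T)\ge\tfrac12\ell c_d=\ell/(d\cdot B(d/2,1/2))$.

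The main obstacle is the lemma, which I would attempt by induction on $\abs{V(T)}$. Peel off a leaf $v$ with neighbour $p$, so that the inductive hypothesis applied to $T-v$ controls the contribution of the other leaves; the remaining task is to show the increment $(M^+(T)-M^+(T-v))+(M^-(T)-M^-(T-v))\ge\abs{g(e_v)}$. When some optimiser for either $M^+(T-v)$ or $M^-(T-v)$ already contains the vertex $p$, we extend it by $e_v$ with the appropriate sign and immediately pick up $\abs{g(e_v)}$. The subtle case is when all optimisers avoid $p$; here I would strengthen the inductive hypothesis to also track, for each vertex $w$, the maximum of $\pm\sum_{e\in T'}g(e)$ over subtrees $T'$ containing $w$, and then combine the optimiser at $w=p$ with the singleton $\{e_v\}$. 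The base cases $\abs{V(T)}\le 3$ are verified by direct case analysis; the trivial single-edge tree is covered separately by the observation that $\disc(T)\ge 1\ge c_d$, since any single edge already forms a subtree with sum of norm $1$.
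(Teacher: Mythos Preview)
The paper does not prove this statement; it is quoted from \cite{KMSWW23} and used as a black box for the lower bound, so there is no in-paper argument to compare against. The only hint is that the authors, when computing $\mathbb{E}_{x\sim\SS^d}\abs{\langle x,a\rangle}=2/(dB(d/2,1/2))$ in the proof of Lemma~\ref{lem:MultidimensionalDiscrepancyStar}, refer back to the proof of Theorem~7 in \cite{KMSWW23}; so your projection-and-average framework is almost certainly the intended one.

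Your reduction to the one-dimensional combinatorial lemma is correct and the constants check out. Two remarks on the lemma itself. First, as stated it is false for the single-edge tree: both endpoints are leaves sharing the same edge $e$, so the right-hand side equals $2\abs{g(e)}$ while $M^+(g)+M^-(g)=\abs{g(e)}$. You patch the theorem for this case separately, but the lemma hypothesis should read ``any tree on at least three vertices''.

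Second, the induction you sketch is unnecessary, and the ``subtle case'' you flag (all optimisers for $T-v$ avoiding the parent $p$) together with the proposed strengthening of the hypothesis is left genuinely unresolved. There is a direct two-line proof. For $\abs{V(T)}\ge 3$ the non-leaf vertices induce a nonempty connected subtree $T^*$, and every leaf edge has its non-leaf endpoint in $V(T^*)$. Setting
\[
T^+\defined T^*\cup\{e_v:g(e_v)>0\},\qquad T^-\defined T^*\cup\{e_v:g(e_v)\le 0\},
\]
both $T^+$ and $T^-$ are subtrees of $T$, and
\[
M^+(g)+M^-(g)\;\ge\;\sum_{e\in T^+}g(e)-\sum_{e\in T^-}g(e)\;=\;\sum_{v\text{ leaf}}\abs{g(e_v)}.
\]
This closes the gap without any induction or vertex-indexed auxiliary quantities.
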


Krishna, Michaeli, Sarantis, Wang and Wang conjectured that the bound in \Cref{thm:LBKrishna} is essentially tight for $d=1$.
We prove that this bound is actually asymptotically tight for every $d \in \mathbb{N}$ and large number of leaves, thereby confirming their conjecture.
In the following result, and throughout this paper, all asymptotic $o(\cdot)$ notation is with respect to $\ell \rightarrow \infty$.

\begin{theorem}
\label{thm:multidimensional-subtree-discrepancy}
For every $d\in \mathbb N$ and every tree $T$ with $\ell$ leaves,
$$\disc(T) = (1+o(1))\dfrac{\ell}{d\cdot B(\frac{d}{2},\frac{1}{2})}.$$
\end{theorem}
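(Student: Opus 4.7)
The plan is to construct an assignment $f\from E(T)\to\SS^d$ matching \Cref{thm:LBKrishna} up to a $(1+o(1))$ factor. The foundational computation is the identity $\mathbb{E}_{u\sim\text{Unif}(\SS^d)}[(\<u,w\>)_+] = 1/(d\cdot B(d/2,1/2))$ for any fixed $w\in\SS^d$, which via concentration of measure on the sphere and a net argument on $\SS^d$ gives, for $\ell$ iid uniform points $u_1,\ldots,u_\ell\in\SS^d$,
\[
    \max_{S\sseq[\ell]}\;\; \Bigg\lVert\sum_{v\in S}u_v\Bigg\rVert \;=\; (1+o(1))\cdot\frac{\ell}{d\cdot B(d/2,1/2)}
\]
with high probability. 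This already settles $T=K_{1,\ell}$: assign each pendant edge $e_v$ the vector $u_v$, and every subtree of the star corresponds to a subset of pendant edges. The task for a general tree is to extend this to match the same constant.

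For general $T$, I would decompose $E(T)$ into $\ell$ edge-disjoint \emph{leaf chains} $C_1,\ldots,C_\ell$, with $C_v$ ending at leaf $v$, via a heavy-path-type decomposition rooted at an internal vertex of $T$. Along each chain $C_v$ I would assign $f$ alternating $u_v,-u_v,u_v,\ldots$ starting from the leaf-side edge. Since $T'\inter C_v$ is always a contiguous sub-path of $C_v$ for every subtree $T'\sseq_c T$, its partial sum equals one of $0,\pm u_v$, and consequently $\sum_{e\in T'}f(e)=\sum_v\epsilon_v u_v$ for some $\epsilon_v\in\{-1,0,1\}$.

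The main obstacle is a factor-of-$2$ overshoot: the quantity $\max_{\epsilon\in\{-1,0,1\}^\ell}\lVert\sum_v\epsilon_v u_v\rVert$ equals $\max_{w\in\SS^d}\sum_v|\<u_v,w\>|$, which for uniform $u_v$'s concentrates around $2\ell/(d\cdot B(d/2,1/2))$, twice the desired constant. To recover the tight constant, the realised signs must lie in $\{0,1\}$ rather than $\{-1,0,1\}$. This is automatic for chains of length $1$ (the star case) but fails for longer chains, where a truncated sub-path of $C_v$ not containing leaf $v$ can produce $-u_v$ depending on its starting parity. A natural fix is to decompose $T$ into $\ell/2$ leaf-to-leaf trails instead of $\ell$ leaf chains: the signs then range over $\{-1,0,1\}^{\ell/2}$, and the same computation now gives exactly $\ell/(d\cdot B(d/2,1/2))$. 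Such a trail decomposition exists precisely when every internal vertex of $T$ has even degree. Handling the general case (for instance binary trees, where most internal vertices have degree $3$) is where I expect the core technical work of the proof to lie: the natural approaches are either a cleverer decomposition that pairs chains at odd branching vertices so their $\pm u$ contributions cancel, or introducing orthogonal correction vectors along long chains to force sub-path sums into the half-space aligned with $u_v$.
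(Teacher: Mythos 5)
Your star case is correct: assigning i.i.d.\ uniform points to the pendant edges and computing $\max_{S}\lVert\sum_{v\in S}u_v\rVert=\max_{w\in\SS^d}\sum_v(\<u_v,w\>)_+\approx \ell/(dB(\frac d2,\frac12))$ via concentration and a net is essentially equivalent to the paper's construction for $S_\ell$ (which instead uses $\ell/2$ antipodal pairs, so that subtree sums become signed sums $\sum_i\eps_i x_i$ bounded by $\max_a\sum_i|\<x_i,a\>|$). The genuine gap is everything after that. You correctly diagnose that your alternating labelling along $\ell$ leaf chains only constrains the coefficients to $\{-1,0,1\}$ and therefore yields the constant $2/(dB(\frac d2,\frac12))$, twice the target, but you do not resolve this. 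The leaf-to-leaf trail decomposition you propose exists only when every internal vertex has even degree, which fails for almost all trees (as you yourself note, already for binary trees); the ``orthogonal correction vectors'' idea is not available at all, since edge labels are forced to lie on $\SS^d$ and cannot be perturbed by small vectors; and the pairing-of-chains-at-odd-branch-vertices idea is left completely unspecified. Since the content of the theorem beyond \Cref{thm:LBKrishna} and the star case is exactly this reduction from general trees to the tight constant, the proof is incomplete at its core.

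For comparison, the paper avoids long chains altogether. First, \Cref{cl:NoParentDeg2} shows that a leaf whose neighbour has degree $2$ can be deleted without changing $\disc$ (label the pendant edge by the negative of the adjacent edge's label), so one may assume every parent of a leaf has degree at least $3$. Then the edges at the leaves contain a star forest $F$ with $\ell_F\ge\ell/2$ edges whose removal leaves a tree $R$ with $\ell_R\le\ell/2$ leaves; a single random antipodal labelling of all of $F$ bounds $\lVert\sum_{e\in T'\cap F}f(e)\rVert$ simultaneously for every subtree $T'$ by $\phi(\ell_F)=c_d\ell_F+2\ell_F^{3/4}$ with $c_d=(dB(\frac d2,\frac12))^{-1}$, giving $\disc(T)\le\disc(R)+\phi(\ell_F)$, and one recurses on $R$. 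Because the leaf counts decay geometrically, the lower-order error terms over the $O(\log\ell)$ levels sum to $o(\ell)$. Some mechanism of this kind --- a global decomposition plus recursion, or any other argument forcing your chain coefficients into $\{0,1\}$ --- is what your write-up still needs before it proves the theorem.
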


Krishna, Michaeli, Sarantis, Wang and Wang also introduced the concept of oriented discrepancy.
For two trees $T'\subseteq_c T$ and orientations $\sigma,\sigma'$ on the edges of $T,T'$, respectively, define the function $d_{T,T'}^{\sigma, \sigma'}\from T'\to \{-1,1\}$ which outputs 1 for edges $e\in T'$ where $\sigma$ and $\sigma'$ agree, and $-1$ otherwise.
For a tree $T$, let $\cO(T)$ denote the set of all pairs $(T',\sigma')$ of a tree $T'\subseteq_c T$ rooted at a vertex $r$ and an orientation $\sigma'$ away from $r$.

The \emph{oriented discrepancy} of the tree $T$ is defined as
\begin{align*}
    \vec{\cD}(T) \defined \min_{\sigma\text{ orientation of }T}\;\;\;\max_{(T',\sigma') \in \cO(T)} \Bigg|\sum_{e \in T'} d_{T,T'}^{\sigma, \sigma'}(e)\Bigg|.
\end{align*}
Intuitively, oriented discrepancy measures to what extent all rooted oriented subtrees of $T$ can be balanced simultaneously when the edges of $T$ are oriented optimally.
Krishna, Michaeli, Sarantis, Wang and Wang proved the following result.

\begin{theorem}[Theorem 4 in \cite{KMSWW23}]\label{thm:orient}
For every tree $T$ with at least $3$ vertices and $\ell$ leaves,
$$\left \lceil\frac{\ell}{2} \right \rceil +1 \leq \vec{\cD}(T) \leq \ell.$$
\end{theorem}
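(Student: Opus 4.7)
The plan is to prove the two bounds separately using different techniques.

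For the upper bound $\vec{\cD}(T)\le\ell$, the natural construction is to use the bipartition of $T$. Write $V(T)=V_0\sqcup V_1$ for the bipartition and let $\sigma$ orient every edge from its $V_0$-endpoint to its $V_1$-endpoint. Given a rooted subtree $(T',r)$, on each edge $uv\in T'$ with $u$ the parent of $v$ in $T'$, $\sigma$ and $\sigma'$ agree precisely when $u\in V_0$. Since $T'$ inherits the bipartition of $T$, a routine count yields $\sum_{e\in T'} d_{T,T'}^{\sigma,\sigma'}(e) = |V_1\cap V(T')| - |V_0\cap V(T')| \pm 1$, with the sign depending on the colour of $r$. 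The proof then reduces to the purely structural inequality $\bigl||A|-|B|\bigr| \le \ell(S)-1$ for every tree $S$ with bipartition $(A,B)$, which follows by summing the handshake identity $\sum_v(\deg_S v-2) = -2$ over each colour class and using that leaves contribute $-1$ while internal vertices contribute non-negatively. Combined with the elementary observation $\ell(T')\le\ell(T)$---proven by injecting each leaf of $T'$ into a distinct leaf of $T$, either itself or one lying in a branch of $T$ hanging off $T'$---this yields $|\text{imbalance}|\le\ell(T')\le\ell$.

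For the lower bound $\vec{\cD}(T)\ge\lceil\ell/2\rceil+1$, fix an arbitrary orientation $\sigma$. Classify each leaf $v$ as an \emph{out-leaf} if $\sigma$ orients its incident edge away from $v$, and as an \emph{in-leaf} otherwise; write $L^+,L^-$ for the two sets, and assume $|L^-|\ge\lceil\ell/2\rceil$ after possibly swapping labels. The key observation is that for any leaf $u$ and any root $r\ne u$, the contribution of the edge $e_u$ to the imbalance (whenever $e_u$ lies in the subtree) depends only on the type of $u$ and not on $r$: it equals $+1$ precisely when $u\in L^-$, because $\sigma'$ always orients $e_u$ from $u$'s unique neighbour towards $u$. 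Consequently, rooting at an out-leaf $r\in L^+$ and including the Steiner tree of $\{r\}\cup L^-$ (i.e.\ the minimal subtree of $T$ containing that set) accumulates a contribution of $+|L^-|$ from the leaf edges at $L^-$, plus an extra $+1$ from $e_r$ (whose $\sigma$-orientation matches $\sigma'$ because $r$ is itself an out-leaf rooting the subtree), giving a magnitude of at least $|L^-|+1\ge\lceil\ell/2\rceil+1$ from the leaf edges alone.

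The main obstacle is controlling the contributions of the internal Steiner-tree edges, which could in principle cancel the favourable leaf contributions. I would manage this by a tree dynamic-program argument: for each vertex $v$ one computes the maximum and minimum imbalance over subtrees rooted at $v$ by a standard depth-first recurrence, and then argues extremally that some $v$ achieves $|\text{imbalance}|\ge\lceil\ell/2\rceil+1$, either directly via the Steiner-tree construction described above, or by first reducing to a topological tree (every internal vertex of degree at least three) through suitable contractions that preserve $\ell$. The degenerate case $L^+=\varnothing$ is handled separately by rooting at an appropriate internal vertex: the leaf edges then already contribute $+\ell$, and one verifies that the internal edges cannot destroy more than $\lfloor\ell/2\rfloor-1$ units. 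Extracting the extra $+1$ beyond the $\lceil\ell/2\rceil$ trivial leaf count, and cleanly taming the internal edges along paths between the distinguished leaves, is the technical heart of the lower bound.
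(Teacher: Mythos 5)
First, note that the paper does not prove this statement at all: it is quoted verbatim as Theorem~4 of \cite{KMSWW23}, so there is no in-paper argument to compare yours with. Judged on its own, your upper bound is correct and complete: orienting along the bipartition, the identity $\sum_{e\in T'}d_{T,T'}^{\sigma,\sigma'}(e)=|V_1\cap V(T')|-|V_0\cap V(T')|\pm 1$, the inequality $\bigl||A|-|B|\bigr|\le \ell(S)-1$ obtained from $\sum_{v\in A}(\deg_S v-2)=|V(S)|-1-2|A|$ (leaves contribute $-1$, internal vertices at least $0$), and the injection showing $\ell(T')\le\ell(T)$ are all sound, and together they give imbalance at most $\ell(T')\le\ell$ for every rooted subtree under that single orientation.

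The lower bound, however, has a genuine gap, and you have located it yourself: the contributions of the internal edges of the Steiner tree of $\{r\}\cup L^-$ are never controlled. This is not a cosmetic issue. For the specific subtree you exhibit, an adversarial orientation can make the internal edges contribute an arbitrarily large negative amount: on a caterpillar whose spine edges are all oriented towards the chosen out-leaf root, the away-from-root orientation disagrees on every spine edge, so the spine contributes minus its length, which can swamp the $|L^-|+1$ gained from leaf edges. Hence the Steiner-tree construction by itself certifies nothing; one must choose the subtree adaptively (pruning branches whose net contribution is negative), and then the difficulty is that pruning also discards favourable leaf edges. The ``dynamic-programming / extremal'' step that is supposed to resolve this trade-off, to extract the extra $+1$ beyond $\lceil\ell/2\rceil$, and to handle the degenerate case $L^+=\varnothing$ (where you assert without proof that internal edges cannot destroy more than $\lfloor\ell/2\rfloor-1$ units) is only gestured at, yet it is precisely the content of the theorem. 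As written, the proposal establishes only the upper bound $\vec{\cD}(T)\le\ell$; the inequality $\vec{\cD}(T)\ge\lceil\ell/2\rceil+1$ remains unproven.
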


Our second main result confirms their conjecture that the lower bound in Theorem~\ref{thm:orient} is asymptotically tight.

\begin{theorem}
\label{thm:SignedDiscrepancy}
For every tree $T$ with $\ell$ leaves,
$$\vec{\cD}(T) = \left(\frac{1}{2}+o(1)\right)\ell.$$
\end{theorem}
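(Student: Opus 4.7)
The lower bound comes from Theorem~\ref{thm:orient}, so I focus on proving $\vec{\cD}(T) \leq (1/2 + o(1))\ell$ by explicitly constructing an orientation $\sigma$ of $T$. For any orientation $\sigma$ and root $r \in V(T)$, define $\sigma^{(r)} \from E(T) \to \{-1, +1\}$ by $\sigma^{(r)}(e) = +1$ iff $\sigma$ orients $e$ away from $r$; then $\sum_{e \in T'} d_{T,T'}^{\sigma, \sigma'}(e) = \sum_{e \in T'} \sigma^{(r)}(e)$. The reformulated goal is $\max_{r,\, T' \ni r} |\sum_{e \in T'} \sigma^{(r)}(e)| \leq (1/2 + o(1))\ell$, noting that $\sigma^{(r)}$ and $\sigma^{(r')}$ differ precisely on the edges of the unique $r$-to-$r'$ path in $T$.

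I plan to build $\sigma$ from a balanced leaf-signing $\epsilon \from \mathrm{Leaves}(T) \to \{-1, +1\}$ with $|\sum_l \epsilon_l| \leq 1$. For each leaf-edge, orient it outward (away from the interior of $T$) if $\epsilon_l = +1$ and inward otherwise; this generalises the extremal star case, in which such a signing exactly realises the lower bound $\lceil \ell/2 \rceil + 1$. For the remaining edges -- those lying on maximal chains of degree-2 vertices and on bridges between branching vertices -- I would orient each chain in an alternating pattern, which by a direct case analysis on where $r$ lies relative to the chain and on how $T'$ enters and exits it makes the total contribution of any single chain to $\sum_{e \in T'} \sigma^{(r)}(e)$ bounded by a constant.

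To verify the bound, I would split $\sum_{e \in T'} \sigma^{(r)}(e)$ into a \emph{leaf contribution} and a \emph{chain contribution}. The leaf contribution is controlled by $\ell/2 + O(1)$ thanks to the balancing of $\epsilon$, mirroring the star analysis, and using the fact that the subtree $T'$ maximising the sum greedily includes branches whose leaves are biased toward $+1$ (or $-1$, for the lower tail). The main obstacle is that the chain contribution, \emph{a priori} of order $O(\ell)$ (one constant per chain, with up to $\Theta(\ell)$ chains in trees such as caterpillars), must be refined to $o(\ell)$. I expect to handle this by randomising the starting parity of each chain's alternation and arguing via Azuma-type concentration that the aggregate chain contribution is $o(\ell)$ simultaneously over all $(r, T')$; an alternative is an inductive argument removing a carefully chosen pair of leaves and their common legs from $T$ at each step, decreasing $\ell$ by $2$ while losing only $O(1)$ in the discrepancy bound, which compounds to $(1/2 + o(1))\ell$ over the induction.
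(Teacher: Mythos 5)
Your reformulation via $\sigma^{(r)}$ and the lower bound are fine, but the upper bound is not established: the step you explicitly defer --- making the aggregate interior-edge (``chain'') contribution $o(\ell)$ --- is the heart of the problem, and the concrete scheme you propose already fails on a simple tree. Let $T$ be the spider in which $\ell$ legs of length two meet at a centre $c$; write a leg as $c\,y\,x$ with $x$ a leaf and $y$ of degree $2$. In your scheme the pendant edge $yx$ is oriented according to a globally balanced leaf signing, while the interior edge $cy$ receives an independent (random) orientation. Rooting at $r=c$, an adversarial subtree takes, from each leg, the edge $cy$ exactly when it is oriented away from $c$, and additionally $yx$ when it too points away from $c$. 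Since the interior orientations are independent of the leaf signs, whatever the balanced signing is, with high probability about a quarter of the legs contribute $+2$ and another quarter contribute $+1$, so the maximum is at least $(3/4-o(1))\ell$, exceeding the target $(1/2+o(1))\ell$. The moral is that globally balancing a leaf signing is useless against a subtree that simply omits the negative branches; what is needed is a coupling of each pendant edge with the adjacent interior edge so that a fully traversed branch contributes $0$ and only ``half of the branches leave the root positively'' survives --- this is exactly what the orientation built in \Cref{cl:thm1.4} does (the pendant edge copies the status of the edge at its degree-$2$ parent). It also shows why randomised parities plus Azuma-type concentration cannot rescue the plan: the \emph{expected} maximum is already too large, quite apart from the fact that the family of pairs $(r,T')$ over which you would need uniformity has size unbounded in terms of $\ell$ alone.

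Your fallback induction is quantitatively off as well: removing two leaves per step while ``losing only $O(1)$'' accumulates a loss of order $\ell$ over the $\Theta(\ell)$ steps, yielding only $\vec{\cD}(T)\le C\ell/2$ for whatever constant $C\ge 1$ the per-step loss is; to recover the sharp constant $1/2$ you would need per-pair loss $1+o(1)$, which is essentially the original problem. The paper's proof avoids this by making the number of lossy steps logarithmic rather than linear: it first passes, at a cost of $+1$, to a subtree in which every leaf-parent has degree at least $3$ (\Cref{cl:thm1.4}); it then peels off in a single step the star forest formed by (almost all) pendant edges, which carries at least half of the leaves, paying $\lceil\ell_F/2\rceil+O(1)$ via the exact star bound $\vec{\cD}(S_m)\le\lceil m/2\rceil+1$; and it recurses on the remaining tree, which has at most $\ell/2$ leaves. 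With recursion depth $O(\log\ell)$ the additive losses total $O(\log\ell)=o(\ell)$. Any repair of your approach needs a comparable mechanism for making the number of constant-loss steps sublinear in $\ell$.
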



\section{Proofs}

We start with a lemma proving \Cref{thm:multidimensional-subtree-discrepancy} in the case of stars.
For $\eps>0$, an $\eps$-net on $\mathbb S^d$ is a set of points $E_{\eps}$ such that every point in $\mathbb S^d$ is at Euclidean distance at most $\eps$ from a point in $E_{\eps}$. Also, define
\begin{align}\label{def:phi}
\phi(\ell) \defined \dfrac{\ell}{dB(\frac{d}{2},\frac{1}{2})} + 2\ell^{3/4}.
\end{align}

For every $\ell\in \mathbb N$, denote by $S_{\ell}$ the star with $\ell$ leaves.

\begin{lemma}
\label{lem:MultidimensionalDiscrepancyStar}
For every $d\in \mathbb N$, there exists $\ell_0 = \ell_0(d)\in \mathbb N$ such that, for every $\ell\ge \ell_0$, we have 
$$\disc{(S_{\ell})} \leq \phi(\ell).$$
In particular, we have
$$\disc(S_\ell) = (1+o(1))\dfrac{\ell}{d B(\frac{d}{2},\frac{1}{2})}.$$
\end{lemma}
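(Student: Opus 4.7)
The plan is a probabilistic argument combined with an $\eps$-net on $\SS^d$. First, since every non-empty subset of $E(S_\ell)$ is a subtree of the star $S_\ell$, and using $\norm{v} = \max_{u\in\SS^d}\<v,u\>$ together with the observation that for fixed $u$ the subset maximising $\<\sum_{e\in A} f(e), u\>$ is $\{e : \<f(e), u\> > 0\}$, I rewrite
\[
\disc(S_\ell) = \min_{f\from E(S_\ell)\to \SS^d}\; \max_{u\in\SS^d}\; \sum_{e\in E(S_\ell)} \<f(e), u\>_+,
\]
where $x_+ \defined \max(x, 0)$. It thus suffices to exhibit an $f$ for which the inner maximum is at most $\phi(\ell)$.

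Next, I sample $f(e_1),\dots,f(e_\ell)$ independently and uniformly from $\SS^d$. A short computation using the marginal density $(1-t^2)^{(d-2)/2}/B(d/2, 1/2)$ on $[-1,1]$ of $\<X,u\>$ for $X$ uniform on $\SS^d$ yields
\[
\mathbb{E}\bigl[\<f(e), u\>_+\bigr] = \frac{1}{d\, B(d/2, 1/2)},
\]
which is exactly the main term of $\phi(\ell)$. As each summand $\<f(e), u\>_+$ lies in $[0,1]$, Hoeffding's inequality gives, for each fixed $u \in \SS^d$,
\[
\Pr\!\left[\sum_{e\in E(S_\ell)} \<f(e), u\>_+ > \frac{\ell}{d\, B(d/2, 1/2)} + \ell^{3/4}\right] \leq \exp\bigl(-2\sqrt{\ell}\,\bigr).
\]

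Finally, I take an $\ell^{-1/4}$-net $E \subset \SS^d$, which has size polynomial in $\ell$ (of order $\ell^{d/4}$). A union bound then ensures that, for $\ell$ large enough depending on $d$, with positive probability the concentration bound above holds simultaneously at every $u \in E$. For arbitrary $u \in \SS^d$, I pick $u' \in E$ with $\norm{u-u'}\le \ell^{-1/4}$; the pointwise Lipschitz estimate $\<f(e), u\>_+ \leq \<f(e), u'\>_+ + \ell^{-1/4}$ introduces a total error of at most $\ell\cdot\ell^{-1/4} = \ell^{3/4}$, so the net transfer brings the overall bound to $\phi(\ell)$. The ``in particular'' statement then follows by pairing this with \Cref{thm:LBKrishna}. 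I expect the only delicate point to be verifying the density/integral identity for $\mathbb{E}[\<X,u\>_+]$, which is a routine spherical change of variables; calibrating the Hoeffding slack and the net resolution to sum to $2\ell^{3/4}$ is then straightforward.
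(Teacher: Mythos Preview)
Your proof is correct and follows essentially the same approach as the paper: random points on $\SS^d$, concentration for each fixed direction, and an $\eps$-net plus Lipschitz transfer to control all directions at once. The only cosmetic differences are that the paper samples $\ell/2$ antipodal pairs $\{x_i,-x_i\}$ and works with $R_a=\sum_i|\<x_i,a\>|$ (hence the even/odd case split), whereas you sample $\ell$ independent points and work with $\sum_e\<f(e),u\>_+$, and the paper uses a finer net $\eps=\ell^{-2}$ while you use $\eps=\ell^{-1/4}$; both choices land within the $2\ell^{3/4}$ error budget of $\phi(\ell)$.
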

\begin{proof}
For convenience, we assume that $\ell$ is even; the case of odd $\ell$ follows similarly.
Select a set of $\ell/2$ points $x_1, \dots, x_{\ell/2}$ uniformly at random on the sphere $\mathbb{S}^d$, and define the set 
\begin{align*}
K\defined \set{x_1, -x_1, x_2, -x_2, \dots, x_{\ell/2}, -x_{\ell/2}}.
\end{align*}
We will show that, with high probability, for every subset $K' \subseteq K$, the sum of the elements of $K'$ is at most $\phi(\ell)$.
To this end, for every $a\in \mathbb{S}^d$, define $R_a\defined \sum_{i=1}^{\ell/2} \abs{\<x_i\cdot a\>}$, where $\<\cdot,\cdot\>$ stands for the $(d+1)$-dimensional scalar product.
For $\eps > 0$, we fix an $\eps$-net $E_{\eps}$ on $\mathbb S^d$
of size at most $(1+2/\eps)^{d+1}$ whose existence is guaranteed, for instance, by Lemma 5.2 in \cite{vershynin2010introduction}.

\begin{claim}\label{cl:IneqEpsNet}
Fix $\eps = 1/\ell^2$. Then, there exists $\ell_0 = \ell_0(d)\in \mathbb N$ such that, for every $\ell\ge \ell_0$, with probability at least $1/2$,
for every $a \in E_{\eps}$, we have
\begin{align*}
R_a \leq \dfrac{\ell}{dB(\frac{d}{2},\frac{1}{2})}+{\ell}^{3/4}.
\end{align*}
\end{claim}
\begin{proof}
Fix $a \in E_{\eps}$.
As the $d$-dimensional area of the unit sphere $\mathbb S^{d-1}$ is given by $2\pi^{d/2}/\Gamma(d/2)$, one can compute (see also the proof of Theorem~7 in \cite{KMSWW23}) that, for $x$ sampled uniformly at random in $\mathbb S^{d-1}$, we have
\begin{align*}
\mathbb{E}\left[\left|\<x \cdot a\>\right|\right] = \frac{2\Gamma(\tfrac{d+1}{2})}{\sqrt{\pi}\Gamma(\tfrac{d}{2})}\int_{0}^{\pi/2} (\sin\theta)^{d-1}\cos\theta\; d\theta = \dfrac{2}{d B(\frac{d}{2},\frac{1}{2})}.
\end{align*} 
Hence, by a standard Chernoff bound,
\begin{align*}
\mathbb{P} \left( \biggl|R_a - \dfrac{\ell}{dB(\frac{d}{2},\frac{1}{2})} \biggr| \geq \ell^{3/4} \right) \leq e^{-\ell^{1/2}/4}.
\end{align*}
As $|E_{\eps}| = O(\ell^{2d+2})$, by choosing $\ell_0$ sufficiently large, taking a union bound over all $a \in E_{\eps}$ concludes the proof.
\end{proof}

Finally, conditionally on the event from Claim~\ref{cl:IneqEpsNet} with $\eps = 1/\ell^2$, we have
\begin{equation}\label{eq:split}
\disc(S_{\ell})\le \max_{\eps_1,\ldots,\eps_{\ell/2}\in \{-1,1\}} \bigg\lVert\sum_{i=1}^{\ell/2} \eps_i x_i\bigg\rVert\le \max_{a\in \mathbb S^d} R_a\le \max_{a'\in E_{\eps}} R_{a'} + \max_{a\in \mathbb S^d}\min_{a'\in E_{\eps}} |R_a - R_{a'}|.
\end{equation}
Note that the first term is at most $\frac{\ell}{dB(d/2,1/2)}+\ell^{3/4}$, and the second term is at most $\eps \ell/2$.
Therefore, we have 
$$\disc(S_{\ell}) \leq \frac{\ell}{dB(\frac{d}{2},\frac{1}{2})}+\ell^{3/4} + \frac{\eps \ell}{2} \leq \phi(\ell),$$
which finishes the proof of the first part of statement.

Together with the lower bound from \Cref{thm:LBKrishna}, the second part of the statement is implied.
\end{proof}

We are now ready to prove \Cref{thm:multidimensional-subtree-discrepancy}.

\begin{proof}[Proof of \Cref{thm:multidimensional-subtree-discrepancy}]
Recall that our asymptotic notation is always with respect to $\ell\to \infty$.
Fix $c_d \defined (d B(d/2,1/2))^{-1}$ and a tree $T$ with a set of leaves $L$, and let $P\defined\set{v\in T\st \abs{N(v)\inter L}\geq 1}$ be the set of vertices adjacent to at least one leaf.
The next claim implies that it is sufficient to show Theorem~\ref{thm:multidimensional-subtree-discrepancy} when every vertex in $P$ has degree at least 3.

\begin{claim}
\label{cl:NoParentDeg2}
If there is a leaf $x$ with neighbour $y$ of degree $2$ in $T$, the subtree $T' = T\setminus \{xy\}$ satisfies $\disc(T') = \disc(T)$.
\end{claim}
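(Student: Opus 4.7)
The plan is to establish the two inequalities $\disc(T') \leq \disc(T)$ and $\disc(T) \leq \disc(T')$ separately. The first direction is essentially immediate: given any $f \from E(T) \to \SS^d$, the restriction $f|_{E(T')}$ is a valid colouring of $T'$, and every subtree of $T'$ is also a subtree of $T$, so the maximum imbalance can only decrease upon restriction. Taking the minimum over $f$ yields $\disc(T') \leq \disc(T)$.

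For the reverse inequality, I would exploit the fact that $y$ has degree exactly $2$ in $T$: letting $z$ denote the second neighbour of $y$, any subtree of $T$ that contains $xy$ and extends beyond it is forced to contain $yz$ as well, since $x$ is a leaf and $y$ has no other incident edge. The natural move is to take an optimal $f' \from E(T') \to \SS^d$ achieving $\disc(T')$ and extend it to $f \from E(T) \to \SS^d$ by setting $f(xy) \defined -f'(yz)$, so that the contributions of $xy$ and $yz$ cancel exactly whenever both appear in a subtree.

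To verify this choice works, I would classify each subtree $S \subseteq_c T$ according to its intersection with $\{xy, yz\}$ into three cases. If $xy \notin S$, then $S$ is itself a subtree of $T'$, and the bound $\disc(T')$ applies directly. If $xy \in S$ but $yz \notin S$, then the degree constraints on $x$ and $y$ together with connectivity of $S$ force $S = \{xy\}$, whose imbalance equals $1$; this is at most $\disc(T')$ because $T'$ still contains at least one edge, so $\disc(T') \geq 1$. Finally, if $\{xy, yz\} \sseq S$, the cancellation built into $f$ reduces the imbalance to $\bigl\lVert \sum_{e \in S \setminus \{xy, yz\}} f'(e) \bigr\rVert$, and this is bounded by $\disc(T')$ provided $S \setminus \{xy, yz\}$ is a subtree of $T'$.

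The only step that requires a brief sanity check is this last claim of connectivity. Since $y$ has degree $1$ in $T'$, the edge $yz$ is pendant in $T'$, so removing it from any subtree of $T'$ containing it simply peels off the leaf $y$ and leaves a connected subset of edges. I do not expect a genuine obstacle beyond this piece of bookkeeping; the rest of the argument is just case analysis on the location of $xy$.
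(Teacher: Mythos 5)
Your proof is correct and follows essentially the same route as the paper: the trivial restriction argument for $\disc(T')\le\disc(T)$, and the extension $f(xy)\defined -f'(yz)$ of an optimal $f'$ so that the two edges cancel in any subtree containing both. In fact you are slightly more careful than the paper, which glosses over the case $\hat T=\{xy\}$ that you handle explicitly via $\disc(T')\ge 1$.
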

\begin{proof}
First, we remark that the inequality $\disc(T')\le \disc(T)$ always holds for trees $T'\subseteq_c T$, so it suffices to prove the inequality $\disc(T') \geq \disc(T)$.
Fix $x,y$ as in the statement of the claim and let $z$ be the other neighbour of $y$ in $T$.
Fix a function $f'\from T'\to\SS^d$ realising the minimum in the definition of $\disc(T')$, and define the function $f\from T\to \SS^d$ which coincides with $f'$ on $T'$ and $f(xy)=-f(yz)$. We will show that
\begin{align}
\label{eq:subtree-discrepancy-target}
\max_{\hat T\subseteq_c T'} \bigg\lVert\sum_{e\in \hat{T}} f'(e) \bigg\rVert = \max_{\hat T\subseteq_c T} \bigg\lVert\sum_{e\in \hat T} f(e) \bigg\rVert.
\end{align}
Indeed, if $\hat T\subseteq_c T'$, then $\sum_{e\in \hat{T}} f'(e) = \sum_{e\in \hat T} f(e)$. 
Moreover, if $\hat T\subseteq_c T$ contains the edge $xy$, then
\[\sum_{e\in \hat T} f(e) = \sum_{e\in \hat T\setminus \{xy,yz\}} f(e) = \sum_{e\in \hat T\setminus \{xy,yz\}} f'(e).\]
This confirms~\eqref{eq:subtree-discrepancy-target} and finishes the proof.
\end{proof}
Moving on with our proof of \Cref{thm:multidimensional-subtree-discrepancy}, we partition $T$ into a star forest $F$ and a tree $R$ as follows. 
Initially, we assign to $R$ every edge of $T$ that is not incident to a leaf.
For edges incident to leaves, we decide if they belong to $F$ or $R$ as follows.
For a vertex $p\in P$, let $I_p=\set{px\in T\st x\in L}$ be the set of edges connecting $p$ to $L$.    
\begin{itemize}
        \item If $p$ has at least two neighbours in $V(T)\setminus L$, we add all edges in $I_p$ to $F$.
        \item If $p$ has at most one neighbour in $V(T)\setminus L$, we select $e\in I_p$ arbitrarily (from $\abs{I_p}\geq 2$ possibilities, as $p$ has degree at least 3) and add $I_p\setminus{\{e\}}$ to $F$ and $e$ to $R$.
\end{itemize}
Let $\ell_F = |F|$ and $\ell_R$ be the number of leaves in $R$.
Note that every edge in $F$ is incident to a leaf in $T$, and the only edges in $R$ incident to leaves of $R$ are those which were incident to leaves in $T$.
In particular, $\ell_F+\ell_R=\ell$.
Additionally, by our choice of $R$, it is clear that $\ell_R\leq\ell/2$.

Let $F_1,\dotsc,F_r$ be the connected components of $F$. By definition of discrepancy and the triangle inequality, we have that
\begin{align}
\label{eq:discrepancy-partition}
\disc(T)\leq \disc(R)+\sum_{i=1}^r\disc(F_i).
\end{align}
Recalling~\eqref{eq:split} applied for $F_1\cup \ldots\cup F_r$ instead of $S_{\ell}$ and $\ell_F\ge \ell/2$ instead of $\ell$, as long as $\ell_F\ge \ell_0$ (with $\ell_0$ defined in Lemma~\ref{lem:MultidimensionalDiscrepancyStar}), we have that 
\begin{align}\label{eq:bound-by-star}
\sum_{i=1}^r\disc(F_i)\leq \max_{a\in \mathbb S^d} R_a \leq \phi(\ell_F).
\end{align}
Note that, if $\ell_F\le \ell_0$, the middle term in~\eqref{eq:bound-by-star} is still at most $\ell_F$.

Now, by Claim~\ref{cl:NoParentDeg2}, there is a subtree $R'\subseteq_c R$ of $R$ without parents of leaves of degree 2 and $\disc(R') = \disc(R)$. 
Combining this observation with~\eqref{eq:discrepancy-partition} and \eqref{eq:bound-by-star}, we deduce that, if $\ell_F\ge \ell_0$, $\disc(T)\le \disc(R')+\phi(\ell_F)$.
As a result, an immediate induction shows that, for some sequence $\ell_1,\ldots,\ell_k$ where $\ell_1+\ldots+\ell_k=\ell$, $\ell_i\ge 2\ell_{i+1}$ for all $i\in [k-1]$ and $\ell_{k-1} > \ell_0 \ge \ell_k$, we have
\begin{align}
\label{eq:LastSumInduction}
    \disc(T)\le \bigg(\sum_{i=1}^{k-1} \phi(\ell_i)\bigg) + \ell_k.
\end{align}
Moreover, since $\phi(\ell_i) = c_d \ell_i + 2\ell_i^{3/4}$ by~\eqref{def:phi}, 
and the sum of $\phi(\ell_i)$ over all $\ell_i\le \sqrt{\ell}$ is of order $O(\sqrt{\ell})$,
the sum on the right-hand side in \eqref{eq:LastSumInduction} is equal to $(1+o(1)) c_d \ell$, as desired.
\end{proof}

We now proceed to prove \Cref{thm:SignedDiscrepancy}. Since the proof closely mirrors that of \Cref{thm:multidimensional-subtree-discrepancy}, the justification of some analogous statements is omitted.

\begin{proof}[Proof of \Cref{thm:SignedDiscrepancy}]
First of all, by orienting $\lceil \ell/2\rceil$ of the edges of $S_{\ell}$ towards the central vertex and the remainder away from it, it follows that $\vec{\cD}(S_d)\le \lceil \ell/2\rceil + 1$ (that is, the lower bound from Theorem~\ref{thm:orient} is tight for stars).
Next, we show an analogue of \Cref{cl:NoParentDeg2} in the setting of oriented discrepancy.

\begin{claim}\label{cl:thm1.4}
For every tree $T$ with $\ell$ leaves, there is a subtree $T'\subseteq_c T$ with at most $\ell$ leaves whose parents all have degree at least $3$ and $\vec{D}(T')+1\ge \vec{D}(T)$.
\end{claim}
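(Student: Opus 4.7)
My plan is to mirror the argument of Claim~\ref{cl:NoParentDeg2}, iteratively removing leaves whose parents have degree $2$; in contrast to the Euclidean setting, however, each such removal may alter the oriented discrepancy by a small additive amount, so the main quantitative task will be to keep the cumulative loss under control.

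First, I would establish a one-step reduction: if $T$ contains a leaf $x$ whose unique neighbour $y$ has degree $2$, and $T^\circ \defined T \setminus \{xy\}$ (viewed as the connected subtree of $T$ containing $y$), then $\vec{\cD}(T) \leq \vec{\cD}(T^\circ) + 1$. I would fix an optimal orientation $\sigma^*$ on $T^\circ$ and extend it to $T$ by choosing an arbitrary direction for the edge $xy$. For any rooted oriented subtree $(\hat T, r, \hat\sigma')$ of $T$: if $xy \notin \hat T$ then $(\hat T, r, \hat\sigma')$ is itself a rooted oriented subtree of $T^\circ$, bounding the signed sum by $\vec{\cD}(T^\circ)$. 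Otherwise, $\hat T \setminus \{xy\}$ is a rooted oriented subtree of $T^\circ$---rooted at $r$ if $r \neq x$, and at $y$ if $r = x$ (using that $y$ becomes a leaf of $T^\circ$)---whose signed sum is bounded by $\vec{\cD}(T^\circ)$; adding the $\pm 1$ contribution of $xy$ yields at most $\vec{\cD}(T^\circ) + 1$.

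Next, I would iterate this reduction until every leaf of the current tree has a parent of degree at least $3$, obtaining the desired subtree $T' \subseteq_c T$. Because each step trades the removed leaf $x$ for a newly-created leaf at $y$, the number of leaves is preserved throughout, so $T'$ has exactly $\ell$ leaves and in particular at most $\ell$.

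Finally, the hardest part will be controlling the cumulative additive loss across the iteration. A naive per-step bound yields a loss of $+1$ per removed vertex, which can be comparable to $|V(T)|$, whereas the claim allows only a single $+1$ overall. To achieve the claimed bound, I would coordinate the orientation choices on the successively added edges along each pendant path of $T$: for a pendant path $v_0 v_1 \cdots v_m$ in $T$ with $v_0 \in T'$ a branching vertex (of degree at least $3$) and $v_1, \ldots, v_m \notin T'$, the orientations of the removed edges $v_1 v_2, \ldots, v_{m-1} v_m$ should be chosen relative to that of the base edge $v_0 v_1$ so that their $\pm 1$ contributions telescope in every rooted oriented subtree sum of $T$. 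Combining these telescoped contributions with the signed sums inherited from rooted subtrees of $T'$ would then yield $\vec{\cD}(T) \leq \vec{\cD}(T') + 1$, as claimed.
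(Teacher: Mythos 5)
Your overall plan coincides with the paper's: delete leaves with degree-$2$ parents one at a time (preserving the leaf count), extend an optimal orientation $\sigma'$ of $T'$ to an orientation $\sigma$ of $T$, and coordinate the choices along pendant paths so that the extra edges cost almost nothing. Your one-step bound $\vec{\cD}(T)\le\vec{\cD}(T\setminus\{xy\})+1$ is correct (and correctly discarded as too lossy). However, the step that carries the entire content of the claim is only asserted, and as phrased it cannot work: you say the orientations on a pendant path $v_0v_1\cdots v_m$ should be chosen so that their contributions ``telescope in every rooted oriented subtree sum of $T$''. With the natural coordinated rule (orient each re-added edge $xy$ towards the degree-$2$ vertex $y$ if and only if the other edge $zy$ at $y$ points towards $y$, which is the paper's choice), a rooted subtree $\hat T$ oriented away from its root traverses each pendant path monotonically, so the $d$-values along the path alternate: the intersection with a pendant path contributes $0$ only when it has an even number of edges, and $\pm1$ when odd. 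Since $\hat T$ can meet many pendant paths in odd-length segments simultaneously, the residuals do not telescope away, and a naive accounting loses up to one per leaf rather than one in total. You never specify the rule nor say how these residuals are absorbed.

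The missing ideas, which the paper supplies, are: (i) a compensation device --- when passing from $\hat T$ to a subtree $\hat T'\subseteq_c T'$, keep or drop the surviving pendant edge $v_0v_1$ of $T'$ on each pendant path according to the parity of $\hat T$'s intersection with that path (equivalently, only ever cut terminal segments of even length); because of the alternation, this reproduces the contribution of every pendant path exactly, so no loss at all accrues there; and (ii) a treatment of the root --- if the root of $\hat T$ lies on a pendant path outside $T'$, one re-roots two steps at a time towards $T'$, using that in $\sigma$ the two edges at each degree-$2$ vertex are simultaneously oriented towards or away from it (so the signed sum is unchanged), and the single exceptional $+1$ of the claim arises only from the one path containing the root when the root ends up at a leaf of $T'$. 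Without (i) and (ii) your sketch does not yield the stated bound $\vec{\cD}(T)\le\vec{\cD}(T')+1$, so as it stands there is a genuine gap, even though the mechanism you point to is the right one.
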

\begin{proof}
Define $T'$ by iteratively deleting the leaves of $T$ whose unique neighbour has degree 2, and let $\sigma'$ be the orientation of $T'$ minimising 
\[\max_{(\hat T',\hat \sigma') \in \cO(T)} \Bigg|\sum_{e \in \hat T'} d_{T', \hat T'}^{\sigma', \hat\sigma'}(e)\Bigg|.\]
We now construct an orientation $\sigma$ of $T$. 
To begin with, we let $\sigma$ coincide with $\sigma'$ on $T'$. 
We run the iterative construction of $T'$ from $T$ backwards.
Let $xy$ be an edge added at a given step where $x$ is a leaf and $y$ is a vertex of degree 2 with neighbours $x,z$ in the new tree.
In this case, we orient the edge $xy$ towards $y$ if and only if $zy$ is oriented towards $y$.
Finally, we define $\sigma$ as the orientation of $T$ obtained at the end of the process.
 
We say that a path in $T$ is \emph{terminal} if it contains only vertices of degree 2 and ends with a leaf.
Fix $\hat T\subseteq_c T$ with orientation $\hat\sigma$ away from a vertex $r\in V(\hat T)$.
We will construct a tree $\hat T'\subseteq_c T'$ with orientation $\hat\sigma'$ away from some root vertex satisfying 
\begin{align}\label{eq:sum1.4}
\bigg(\sum_{e \in \hat T'} d_{T',\hat T'}^{\sigma', \hat\sigma'}(e)\bigg) + 1\ge \sum_{e \in \hat T} d_{T,\hat T}^{\sigma, \hat\sigma}(e).
\end{align}
The construction goes as follows.
If $r$ is a non-leaf vertex in $T'$, by cutting terminal paths of even length from $T$, one can ensure that there is a tree $\hat T'\subseteq_c T'\cap \hat T$ with orientation $\hat\sigma'$ that agrees with $\hat\sigma$ on $\hat T'$ and such that
\begin{align*}
\sum_{e \in \hat T'} d_{T',\hat T'}^{\sigma', \hat\sigma'}(e) = \sum_{e \in \hat T} d_{T,\hat T}^{\sigma, \hat\sigma}(e).
\end{align*}
If $r$ is a leaf in $T'$, then, similarly to the previous case, one may cut terminal paths from $T$ which all have even length, with one exception: we cut the terminal path starting at $r$ irrespective of its length.
As a result, we obtain a new tree $\hat T'$ with orientation $\hat\sigma'$ satisfying
\begin{align*}
\sum_{e \in \hat T'} d_{T',\hat T'}^{\sigma', \hat\sigma'}(e)-\sum_{e \in T''} d_{T,\hat T}^{\sigma, \hat\sigma}(e)\in \{-1,0,1\}.
\end{align*}
This shows~\eqref{eq:sum1.4} and finishes the proof in this case.

Finally, suppose that $r$ is not in $T'$. 
Then, $r$ must be the endpoint of some subpath $zwr$ of a terminal path in $T$ starting from $z$.
Then, modifying the orientation $\hat\sigma$ by re-rooting $\hat T$ at $z$ does not change the right hand side of~\eqref{eq:sum1.4}, since in $\sigma$, both edges incident to $w$ are either simultaneously oriented towards $w$ or simultaneously oriented away from $w$. 
Iterating this argument takes us to some of the previous two cases and finishes the proof.
\end{proof}

Finally, by combining the analogue of Lemma~\ref{lem:MultidimensionalDiscrepancyStar} for oriented discrepancy, Claim~\ref{cl:thm1.4} and the decomposition $T=F\cup R$ constructed in the proof of Theorem~\ref{thm:multidimensional-subtree-discrepancy},
one can derive the inequality
\[\vec{\cD}(T)\le (\vec{\cD}(R')+1)+\bigg(\left\lceil \frac{\ell_F}{2}\right\rceil+1\bigg),\]
where $R'$ is the tree obtained from $R$ in Claim~\ref{cl:thm1.4}.
As a result, immediate induction shows that, for some sequence $\ell_1,\ldots,\ell_k$ where $\ell_1+\ldots+\ell_k=\ell$ and $\ell_i\ge 2\ell_{i+1}$ for all $i\in [k-1]$, we have
\[\vec{\cD}(T)\le \sum_{i=1}^k \left(\left\lceil \frac{\ell_i}{2}\right\rceil+2\right).\]
Since $k\le \log_2 \ell$, the sum on the right-hand side in the last display is equal to $(1/2+o(1)) \ell$, finishing the proof.
\end{proof}

\bibliographystyle{abbrvnat}  
\bibliography{bib}

\end{document}